\documentclass[12pt]{amsart}
\usepackage{amsfonts,amssymb,amscd,amsmath,enumerate,verbatim}
\usepackage[latin1]{inputenc}
\usepackage{amscd}
\usepackage{latexsym}
\usepackage{pstcol,pst-plot,pst-3d}
\usepackage{mathptmx}
\usepackage{multicol}

\psset{unit=0.7cm,linewidth=0.8pt,arrowsize=2.5pt 4}

\newpsstyle{fatline}{linewidth=1.5pt}
\newpsstyle{fyp}{fillstyle=solid,fillcolor=verylight}
\definecolor{verylight}{gray}{0.97}
\definecolor{light}{gray}{0.9}
\definecolor{medium}{gray}{0.85}



%
%
%

%
%
\def\frk{\mathfrak}               

\def\Phi{{\frk N}}
%
%


%
\def\opn#1#2{\def#1{\operatorname{#2}}} 
%
\opn\chara{char} \opn\length{\ell} \opn\pd{pd} \opn\rk{rk}
\opn\projdim{proj\,dim} \opn\injdim{inj\,dim} \opn\rank{rank}
\opn\depth{depth} \opn\grade{grade} \opn\height{height}
\opn\embdim{emb\,dim} \opn\codim{codim}

\opn\Tr{Tr} \opn\bigrank{big\,rank}
\opn\superheight{superheight}\opn\lcm{lcm}
\opn\trdeg{tr\,deg}
\opn\reg{reg} \opn\lreg{lreg} \opn\ini{in} \opn\lpd{lpd}
\opn\size{size}\opn{\mult}{mult}
%
\opn\div{div} \opn\Div{Div} \opn\cl{cl} \opn\Cl{Cl}
%
%
\opn\Spec{Spec} \opn\Supp{Supp} \opn\supp{supp} \opn\Sing{Sing}
\opn\Ass{Ass} \opn\Min{Min}
%
%
\opn\Ann{Ann} \opn\Rad{Rad} \opn\Soc{Soc}
%
%
\opn\Syz{Syz} \opn\Im{Im} \opn\Ker{Ker} \opn\Coker{Coker}
\opn\Am{Am} \opn\Hom{Hom} \opn\Tor{Tor} \opn\Ext{Ext}
\opn\End{End} \opn\Aut{Aut} \opn\id{id} \opn\ini{in}

\opn\nat{nat}
\opn\pff{pf}
\opn\Pf{Pf} \opn\GL{GL} \opn\SL{SL} \opn\mod{mod} \opn\ord{ord}
\opn\Gin{Gin}
\opn\Hilb{Hilb}\opn\adeg{adeg}\opn\std{std}\opn\ip{infpt}
\opn\Pol{Pol}
\opn\sat{sat}
\opn\Var{Var}
\opn\Gen{Gen}
\opn\indmatch{indmatch}

%
%
\opn\aff{aff} \opn\con{conv} \opn\relint{relint} \opn\st{st}
\opn\lk{lk} \opn\cn{cn} \opn\core{core} \opn\vol{vol}
\opn\link{link} \opn\star{star}
\opn\gr{gr}


%
%

\def\pot#1#2{#1[\kern-0.28ex[#2]\kern-0.28ex]}

%
%
\opn\dirlim{\underrightarrow{\lim}}
\opn\inivlim{\underleftarrow{\lim}}
%
%
%

%
%

\def\Implies{\ifmmode\Longrightarrow \else
        \unskip${}\Longrightarrow{}$\ignorespaces\fi}
\def\implies{\ifmmode\Rightarrow \else
        \unskip${}\Rightarrow{}$\ignorespaces\fi}
\def\iff{\ifmmode\Longleftrightarrow \else
        \unskip${}\Longleftrightarrow{}$\ignorespaces\fi}

\let\:=\colon
\newtheorem{Theorem}{Theorem}[section]
\newtheorem{Lemma}[Theorem]{Lemma}
\newtheorem{Corollary}[Theorem]{Corollary}

\newtheorem{Remark}[Theorem]{Remark}

\newtheorem{Definition}[Theorem]{Definition}

%
%
\let\epsilon\varepsilon
\let\phi=\varphi
\let\kappa=\varkappa
%
%
\textwidth=15cm \textheight=22cm \topmargin=0.5cm
\oddsidemargin=0.5cm \evensidemargin=0.5cm \pagestyle{plain}
%

\def\qed{\ifhmode\textqed\fi
      \ifmmode\ifinner\quad\qedsymbol\else\dispqed\fi\fi}
\def\textqed{\unskip\nobreak\penalty50
       \hskip2em\hbox{}\nobreak\hfil\qedsymbol
       \parfillskip=0pt \finalhyphendemerits=0}
\def\dispqed{\rlap{\qquad\qedsymbol}}

%
\opn\dis{dis}
\def\pnt{{\raise0.5mm\hbox{\large\bf.}}}

\opn\Lex{Lex}




\newcommand{\inD}[1][\relax]{\def\argone{#1}\def\temprelax{\relax}
  \ifx\argone\temprelax\right.\else\,\middle|#1\right.{}\fi}

\newif\ifbinary
\binarytrue

\begin{document}

\title{On the Generalized Binomial Edge ideals of Generalized Block Graphs}

\author{Faryal Chaudhry, Rida Irfan}

\address{Department of Mathematics and Statistics, The University of Lahore, Lahore, Pakistan} \email{chaudhryfaryal@gmail.com}

\address{Department of Mathematics, COMSATS Institute of Information Technology, Sahiwal, Pakistan}
\email{ridairfan@ciitsahiwal.edu.pk,ridairfan\_88@yahoo.com}

\address{} \email{}

\thanks{This paper was completed while the authors visited the Grigore Moisil Romanian-Turkish joint Laboratory of Mathematical Research hosted by the Faculty of Mathematics and Computer Science, Ovidius University of Constanta, Romania.}

\begin{abstract}
We compute the depth and (give bounds for) the regularity of generalized binomial edge ideals associated with generalized block graphs.
\end{abstract}
\subjclass[2010]{16E05, 05E45, 13C15}

\keywords{Generalized binomial edge ideals, Generalized block graph, Depth, Castelnuovo-Mumford regularity}

\maketitle

\section{Introduction}
Generalized binomial edge ideals were introduced by Rauh in \cite{Rauh}. They are ideals generated by a collection of $2$-minors in a generic matrix. The interest in studying these ideals comes from their connection to conditional independence ideals.

Let $X=(x_{ij})$ be an $m\times n$-matrix of indeterminates and $G$ be a graph on the vertex set $[n]$. The generalized binomial edge ideal $\mathfrak{J}_G$ of $G$ is generated by all the $2$-minors of $X$ of the form $[k,l|i,j]$ where $1\leq k <l\leq m$ and $\{i,j\}$ is an edge of $G$ with $i<j$. When $m=2$, $\mathfrak{J}_G$ coincides with the classical binomial ideal $J_G$ introduced in \cite{HHHKR} and \cite{Oh}.

Generalized binomial edge ideals are a natural extension of the binomial edge ideals considered in \cite{HHHKR} and \cite{Oh}. Some of the properties of the binomial edge ideal $J_G$ extend naturally to its generalization $\mathfrak{J}_G$. For example, as it was proved in \cite{Rauh}, $\mathfrak{J}_G$ is a radical ideal and its minimal primes are determined by the so-called sets with the cut point property of $G$.

From homological point of view, we are interested in studying the resolution of generalized binomial edge ideals and of the numerical data arising from it. There are already many interesting results concerning the invariants of classical binomial edge ideals. For instance, it is known that the regularity of $J_G$ is bounded below by $1+\ell$, where $\ell$ is the length of longest induced path in $G$ and bounded above by the number of vertices of $G$; see \cite{MM}. Other nice results on the homological properties of $J_G$ may be found in \cite{B, CDI,AD, EHH,EZ, KM, KM1, KM2, Sara, Sara2, Sara3, SZ}.

For generalized binomial edge ideals, not so much is known about their resolutions. For example, Madani and Kiani computed in \cite{Sara2} some of the graded Betti numbers of binomial edge ideals associated to a pair of graphs. In particular, they prove that $\mathfrak{J}_G$ has a linear resolution if and only if $m=2$ and $G$ is the complete graph, and $\mathfrak{J}_G$ has linear relations if and only if $G$ is a complete graph.

In this paper, we study the ideal $\mathfrak{J}_G$ where $G$ is a generalized block graph. We show that $\depth(\mathfrak{J}_G)=\depth(\ini_<(\mathfrak{J}_G))$ and we express this depth in terms of the combinatorics of the underlying graph $G$. Here $<$ denotes the lexicographic order on the set of indeterminates ordered naturally, that is, $x_{11}>\cdots> x_{1n}>x_{21}>\cdots> x_{2n}>\cdots>x_{mn}$. Moreover, for $m\geq n$, we show that $\reg{\mathfrak{J}_G}=\reg(\ini_<(\mathfrak{J}_G))=n$, where $n$ is the number of vertices of the graph $G$. When $m<n$, then we provide an upper bound for the regularity of $\ini_<(\mathfrak{J}_G)$ and, therefore, for the regularity of $\mathfrak{J}_G$ as well. Our results generalize the ones obtained in the papers \cite{CDI, EHH, {KM}} for classical binomial edge ideals associated with (generalized) block graphs.

The organization of our paper is as follows. In Section \ref{S2} we recall basic notions of graph theory including the definition of generalized block graphs and review the definitions of depth and regularity. Section \ref{S3} contains our main result, namely Theorem \ref{2.2} and its proof.

In the last part, we derive some consequences of the main theorem. For example, in Corollary \ref{2.3} we particularize Theorem \ref{2.2} to block graphs and in Corollary \ref{2.4} we show that if $G$ is a block graph, then $\mathfrak{J}_G$ is unmixed if and only if $\mathfrak{J}_G$ is Cohen-Maculay if and only if $G$ is a complete graph.

Finally, in Corollary \ref{2.5} we recover Corollary $15$ in \cite{Sara2} which gives the regularity of $\mathfrak{J}_G$ if $G$ is a path graph, but we show more, namely that $\reg\ini_<(\mathfrak{J}_G)$ is equal to $\reg \mathfrak{J}_G$.

\section{Preliminaries}\label{S2}

In this section, we introduce the notation used in this paper and summarize a few results on generalized binomial edge ideals.

Let  $m,n\geq 2$ be integers and let $G$ be an arbitrary simple graph on the vertex set $[n]$. Throughout this paper all the graphs are simple, that is, without loops and multiple edges. We fix a field $K$; let $X=(x_{ij})$ be an $(m\times n)$-matrix of indeterminates, and denote by $S=K[X]$ the polynomial ring in the variables $x_{ij}, ~i=1,\ldots, m$ and $j=1,\ldots, n$.

For $1\leq k<l\leq m$, and $\{i,j\}\in E(G)$, with $1\leq i<j\leq n$, we set

\[p_{ij}^{kl}=[k,l|i,j]=x_{ki}x_{lj}-x_{li}x_{kj}.\]

The ideal $\mathfrak{J}_G=(p_{ij}^{kl}~: ~1\leq k<l\leq m,~\{i,j\}\in E(G))$ is called the {\em generalized binomial edge ideal} of $G$; see \cite{Rauh}.

We first recall some basic definitions from graph theory.  A {\em chordal} graph is a graph without cycles of length greater than or equal to $4.$ A {\em clique} of a graph $G$ is a complete subgraph of $G.$ The cliques of a  graph $G$ form a simplicial complex, $\Delta(G),$ which is called the {\em clique complex } of $G.$ Its facets are the maximal cliques of $G.$ A graph $G$ is a {\em block graph} if and only if it is  chordal and every two maximal cliques have at most one vertex in common. This class was considered in \cite[Theorem 1.1]{EHH}. A chordal graph is called a {\em generalized block graph} if for any three maximal cliques whose intersection is nonempty then intersection of each pair of them is same. In other words, for every $F_i,F_j,F_k\in \Delta(G)$ with the property that $F_i\cap F_j\cap F_k\neq \emptyset$, we have $F_i\cap F_j=F_j\cap F_k=F_i\cap F_k$. This class of graphs was considered in \cite{KM}. Obviously, every block graph is  a generalized block graph.

Let $G$ be a graph. A vertex $i$ of $G$ whose deletion from the graph gives a graph with more connected components than $G$ is called a {\em cut point} of G. A subset $\mathcal{T}\subset [n]$ is said to have the {\em cut point property} for $G$ (cut point set, in brief) if for every $i\in \mathcal{T}$, $c(\mathcal{T}\setminus \{i\})<c(\mathcal{T})$, where $c(\mathcal{T})$ is the number of connected components of the restriction of $G$ to $[n]\setminus \mathcal{T}$. A {\em cut set} of a graph $G$ is a subset of vertices whose deletion increases the number of connected components of $G$. A minimal cut set of $G$ is a cut set which is minimal with respect to inclusion. The {\em clique number} of a graph $G$ is the maximum size of the maximal cliques of $G$. We denote it by $\omega (G)$.

Let $G$ be a generalized block graph. Then $\mathcal{A}_i(G)$ is the collection of cut sets of $G$ of cardinality $i$, where $i=1,\ldots,\omega (G)-1$. We denote $a_i(G)=|\mathcal{A}_i(G)|$. Clearly,  $a_i(G)=0$ for all $i>1$ if and only if $G$ is a block graph.

The clique complex $\Delta(G)$ of a chordal graph $G$ has the property that there exists a {\em leaf order} on its facets.  This means that the facets of $\Delta(G)$ may be ordered as $F_1,\ldots, F_r$ such that, for every $i>1,$ $F_i$ is a leaf of the simplicial complex generated by $F_1,\ldots,F_i$. A {\em leaf} $F$ of a simplicial complex $\Delta$ is a facet of $\Delta$ with the property that there exists another facet of $\Delta$, say $F'$, such that, for every facet $H\neq F$ of $\Delta$, $H\cap F\subseteq F'\cap F.$ Such facet $F'$ is called a branch of $F$.

Let $<$ be the lexicographic order on $S$ induced by the natural order of the variables, that is, $x_{11}>\cdots> x_{1n}>x_{21}\cdots>x_{mn}$. As it was shown in \cite[Theorem 2]{Rauh}, the Gr\"obner basis of $\mathfrak{J}_{G}$ with respect to this order may be given in terms of the admissible paths of $G$. We recall the definition of an admissible path from \cite{HHHKR} (also see \cite{Rauh}).

\begin{Definition}{\em \
Let $i<j$ be two vertices of $G$. A path $i=i_{0},i_{1},\ldots,i_{r-1},i_{r}=j$ from $i$ to $j$ is called  {\em admissible} if the following conditions are fulfilled:
\begin{enumerate}
  \item $i_{k}\neq i_{l}$ for $k\neq l$;
  \item for each $k=1,\ldots,r-1$ on has either $i_{k}<i$ or $i_{k}>j$;
  \item for any proper subset $\{j_{1},\ldots,j_{s}\}$ of $\{i_{1},\ldots,i_{r-1}\}$, the sequence $i,j_{1},\ldots,j_{s},j$ is not a path in $G$.
\end{enumerate}
}
\end{Definition}
According to \cite{Rauh}, a function $\kappa: \{0,\ldots, r\}\rightarrow [m]$ is called {\em $\pi$-antitone} if it satisfies
\[i_s<i_t\Rightarrow \kappa(s)\geq \kappa(t),  ~\text{for all}~ 0\leq s,t\leq r.\]


To any admissible path $\pi: i=i_{0},i_{1},\ldots,i_{r-1},i_{r}=j$, where $i<j$ and any function \\$\kappa: \{0,\ldots, r\}\rightarrow [m]$ one associates the monomial

\[u_{\pi}^\kappa= \prod_{k=1}^{r-1} x_{\kappa(k)i_{k}}.\]

By \cite[Theorem 2]{Rauh}, it follows that the set of binomials

\[\mathcal{G}=\bigcup _{i<j}\{u_{\pi}^\kappa p_{ij}^{\kappa(j)\kappa(i)} : ~i<j, ~\pi~  \text{is an admissible path in G from $i$ to $j$, $\kappa$ is stricly $\pi$-antitone}\}\]
is a reduced Gr$\ddot{o}$bner basis of $\mathfrak{J}_G$ with respect to the lexicographic order. Therefore, the initial ideal of $\mathfrak{J}_{G}$ is

$$(\bigcup _{i<j}\{u_{\pi}^\kappa x_{\kappa(j)i}x_{\kappa(i)j} : ~i<j, ~\pi ~\text{is an admissible path in G from $i$ to $j$, $\kappa$ is strictly $\pi$-antitone}\}).$$

Moreover, since $\ini_<(\mathfrak{J}_{G})$ is a radical ideal, it follows that $\mathfrak{J}_{G}$ is radical as well. Consequently, $\mathfrak{J}_{G}$ is the intersection of its minimal primes.

We now explain how the minimal primes of $\mathfrak{J}_{G}$ can be identified. In \cite[Section 3]{Rauh} (see also \cite{EHHQ}), it is shown that the minimal primes of $\mathfrak{J}_{G}$ are of the form $P_W$ with $W=[m]\times \mathcal{T}$, where $\mathcal{T}\subset [n]$ is a set with the cut point property of $G$.
For a given cut point set $\mathcal{T}\subset [n]$, let $G_1,\ldots,G_{c(\mathcal{T})}$ be the connected components of the restriction of $G$ to $[n]\setminus \mathcal{T}$ and $\widetilde{G}_1,\ldots,\widetilde{G}_{c(\mathcal{T})}$ the complete graphs on the vertex sets $V(G_1),\ldots,V(G_{c(\mathcal{T})})$, respectively.  Let $Q_t$ be the prime ideal
\[Q_t=( p_{ij}^{kl}~:~1\leq k<l\leq m, ~\{i,j\}\in E(\widetilde{G}_t)),~\text{for} ~1\leq t\leq c(\mathcal{T}).\]

If $W=[m]\times \mathcal{T}$, then
\[P_W=(\{x_{ij}~:~(i,j)\in W\},Q_1,\ldots,Q_{c(\mathcal{T})}).\]

We recall now the definition of some homological invariants of a finitely generated $S$-module $M$. Let $M$ be a graded finitely generated $S$-module and
\[\mathbf{F_\bullet} : \cdots\rightarrow F_{2}\rightarrow F_{1}\rightarrow F_{0}\rightarrow M\rightarrow 0\]
be its minimal graded free $S$-resolution with $F_{i}=\bigoplus_{j\in \mathbb{Z}} S(-j)^{\beta_{ij}}$ for all $i$. Then the exponents $\beta_{ij} = \beta_{ij}(M)$ are called the \textit{graded Betti numbers} of $M$.
The number
$$\projdim(M) = \max\{i : \beta_{ij} \neq {0} ~ \text{for some} ~{j} \in \mathbb{Z}\}$$
is called the \textit{projective dimension} of $M$ and the number $$\reg(M) = \max\{j : \beta_{i,i+j} \neq {0}~~ \text{for some} ~ {i}\}$$ is called the \textit{regularity} of $M$. By Auslander-Buchsbam formula, we have
\[\depth M=\dim S-\projdim M.\]

\section{Main Results}\label{S3}

In this section we prove the main results of this paper.
\begin{Lemma}\label{2.1}
Let $m,n\geq 2$. Let $G$ be a graph on the vertex set $n$ and let $j$ be any vertex of $G$. Then
$$\ini_{<}(\mathfrak{J}_{G},x_{1j},\ldots, x_{mj})=(\ini_{<}(\mathfrak{J}_{G}),x_{1j},\ldots, x_{mj}).$$

\end{Lemma}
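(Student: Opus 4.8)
The plan is to prove the stronger statement that $\mathcal{G}\cup\{x_{1j},\ldots,x_{mj}\}$ is a Gr\"obner basis of $\mathfrak{J}_G+(x_{1j},\ldots,x_{mj})$ with respect to $<$, where $\mathcal{G}$ is Rauh's reduced Gr\"obner basis of $\mathfrak{J}_G$ recalled above. Granting this, the initial ideal of the sum is generated by the leading monomials of the $g\in\mathcal{G}$ together with $x_{1j},\ldots,x_{mj}$, which is exactly $(\ini_<(\mathfrak{J}_G),x_{1j},\ldots,x_{mj})$, and the lemma follows. The inclusion $\supseteq$ is in any case automatic: since $\mathfrak{J}_G\subseteq\mathfrak{J}_G+(x_{1j},\ldots,x_{mj})$ we get $\ini_<(\mathfrak{J}_G)\subseteq\ini_<(\mathfrak{J}_G+(x_{1j},\ldots,x_{mj}))$, and each $x_{sj}$ is its own leading term and lies in the sum; the content of the lemma is the reverse inclusion, which the Gr\"obner basis claim supplies.

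To verify the Gr\"obner basis property I would apply Buchberger's criterion to $\mathcal{H}=\mathcal{G}\cup\{x_{1j},\ldots,x_{mj}\}$. The $S$-polynomials among elements of $\mathcal{G}$ reduce to zero because $\mathcal{G}$ is already a Gr\"obner basis of $\mathfrak{J}_G$, and any two distinct $x_{sj},x_{tj}$ have coprime leading terms, so their $S$-polynomial reduces to zero by the coprimality criterion. The only $S$-polynomials requiring attention are those of the form $S(x_{sj},g)$ with $g=u_\pi^\kappa\,p_{i_0i_r}^{\kappa(i_r)\kappa(i_0)}\in\mathcal{G}$, where $\pi\colon i_0,\ldots,i_r$ is an admissible path and $\kappa$ is strictly $\pi$-antitone. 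If $x_{sj}$ does not divide $\ini_<(g)$ the leading terms are coprime and there is nothing to check, so assume $x_{sj}\mid\ini_<(g)$. Writing $g=M-N$ as the difference of its two monomial terms with $\ini_<(g)=M=u_\pi^\kappa\,x_{\kappa(i_r)i_0}x_{\kappa(i_0)i_r}$ and $N=u_\pi^\kappa\,x_{\kappa(i_0)i_0}x_{\kappa(i_r)i_r}$, one computes directly that $S(x_{sj},g)=N$.

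The crux is then a structural observation about the generators of $\mathcal{G}$: both monomials $M$ and $N$ involve exactly the same set of columns, namely the vertex set $\{i_0,i_1,\ldots,i_r\}$ of the path, each column occurring in precisely one variable. Hence, as soon as column $j$ appears in $M$ (which it must, since $x_{sj}\mid M$), it also appears in $N$; that is, $N$ is divisible by some $x_{tj}$ with $1\le t\le m$, so $N\in(x_{1j},\ldots,x_{mj})$ and reduces to $0$ modulo $\mathcal{H}$. The point I expect to be the only genuinely delicate one is that the row index $t$ need not equal $s$: when $j$ is an interior vertex $i_k$ the two occurrences share the row $\kappa(k)$, but when $j$ is an endpoint the column-$j$ variable switches rows between $M$ and $N$ (from $\kappa(i_r)$ to $\kappa(i_0)$, or vice versa). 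This is precisely why one must adjoin the entire column $x_{1j},\ldots,x_{mj}$ rather than a single variable. With all $S$-polynomials accounted for, Buchberger's criterion shows that $\mathcal{H}$ is a Gr\"obner basis, completing the proof.
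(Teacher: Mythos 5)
Your proposal is correct, and it takes a genuinely different route from the paper. The paper's proof follows \cite[Lemma 3.1]{CDI}: it first passes to the deleted graph, using that $(\mathfrak{J}_{G},x_{1j},\ldots,x_{mj})=(\mathfrak{J}_{G\setminus\{j\}},x_{1j},\ldots,x_{mj})$ (every generator coming from an edge through $j$ dies modulo the column-$j$ variables) and that $\ini_{<}(\mathfrak{J}_{G\setminus\{j\}},x_{1j},\ldots,x_{mj})=(\ini_{<}(\mathfrak{J}_{G\setminus\{j\}}),x_{1j},\ldots,x_{mj})$, and then checks the generator-level equality $(\ini_{<}(\mathfrak{J}_{G\setminus\{j\}}),x_{1j},\ldots,x_{mj})=(\ini_{<}(\mathfrak{J}_{G}),x_{1j},\ldots,x_{mj})$: a generator $u_{\pi}^{\kappa}x_{\kappa(l)k}x_{\kappa(k)l}$ of $\ini_{<}(\mathfrak{J}_{G})$ not divisible by any $x_{ij}$ must come from an admissible path avoiding $j$, hence lies in $\ini_{<}(\mathfrak{J}_{G\setminus\{j\}})$. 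You instead run Buchberger's criterion directly on $\mathcal{G}\cup\{x_{1j},\ldots,x_{mj}\}$, computing $S(x_{sj},g)=N$ when $x_{sj}\mid\ini_{<}(g)$ and invoking the structural fact that both monomials of $g$ occupy exactly the columns $i_{0},\ldots,i_{r}$ of the path, one variable per column, so that $N$ is divisible by some $x_{tj}$ and reduces to zero. Note that both arguments ultimately rest on that same structural fact (the paper needs it too, to conclude that a leading monomial with no column-$j$ variable forces the path to avoid $j$), but yours buys a bit more: it proves the stronger statement that the union is a Gr\"obner basis of the sum; it makes explicit the endpoint row-switch (the divisor of $N$ in column $j$ may sit in a different row, $t\neq s$, which is exactly why the full column must be adjoined); it absorbs, as its coprime-leading-term case, the step $\ini_{<}(\mathfrak{J}_{G\setminus\{j\}},x_{1j},\ldots,x_{mj})=(\ini_{<}(\mathfrak{J}_{G\setminus\{j\}}),x_{1j},\ldots,x_{mj})$ that the paper treats as clear; and it extends verbatim to any set of columns $\{x_{1j},\ldots,x_{mj}\,:\,j\in A\}$, yielding Remark \ref{re} directly rather than by iterating the lemma. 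The paper's route is shorter on the page and records the useful auxiliary identity with $\mathfrak{J}_{G\setminus\{j\}}$, while yours is more self-contained.
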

\begin{proof}
The proof is similar to \cite[Lemma 3.1]{CDI}. Clearly, we have
$$\ini_{<}(\mathfrak{J}_{G},x_{1j},\ldots, x_{mj})=\ini_{<}(\mathfrak{J}_{G\setminus \{j\}},x_{1j},\ldots, x_{mj})=(\ini_{<}(\mathfrak{J}_{G\setminus\{j\}}),x_{1j},\ldots, x_{mj}).$$
Therefore, we have to show that $(\ini_{<}(\mathfrak{J}_{G\setminus\{j\}}),x_{1j},\ldots, x_{mj})= (\ini_{<}(\mathfrak{J}_{G}),x_{1j},\ldots, x_{mj})$. Since $\mathfrak{J}_{G\setminus \{j\}}\subset\mathfrak{J}_{G}$, the inclusion $\subseteq$ is obvious. For the other inclusion, let $u$ be any generator of $\ini_{<}(\mathfrak{J}_{G})$. If there exist some $i$ such that $x_{ij}|u$, then obviously $u\in (\ini_{<}(\mathfrak{J}_{G\setminus\{j\}},x_{1j},\ldots, x_{mj}))$. Now suppose that $x_{ij}\nmid u$, for all $1\leq i\leq m $. This means that $u= u_{\pi}^{\kappa} x_{\kappa (l)k}x_{\kappa(k)l}$, for some admissible path $\pi$ from $k$ to $l$, which does not contains the vertex $j$. This implies that $\pi$ is a path in $G\setminus \{j\}$. Hence $u\in \ini_{<}(\mathfrak{J}_{G\setminus \{j\}})$. This completes the proof.
\end{proof}

\begin{Remark}\label{re}
Following the proof of Lemma~ \ref{2.1}, we can extend our result for any subset of vertices of $G$, that is, for any $A\subset[n],$ we have:
$$\ini_{<}(\mathfrak{J}_{G},\{x_{1j},\ldots, x_{mj}~|~j\in A\})=(\ini_{<}(\mathfrak{J}_{G}),\{x_{1j},\ldots, x_{mj}~|~j\in A\}).$$
\end{Remark}

First we observe that, in order to compute the depth and the regularity of $S/{\mathfrak{J}_{G}}$ or $S/\ini_{<}(\mathfrak{J}_{G})$ we may reduce to connected graphs. Indeed, if $G$ is disconnected and has the connected components $G_1,\ldots, G_c$, we have

\begin{equation}\label{E1}
 S/{\mathfrak{J}_{G}}\cong S_1/{\mathfrak{J}_{G_1}}\otimes_K \cdots \otimes_K S_c/{\mathfrak{J}_{G_c}}
\end{equation}

where $S_i= K[x_{1j},\ldots, x_{mj}: j\in V(G_i)].$

The above isomorphism is due to the fact that ${\mathfrak{J}_{G_1}},\ldots, {\mathfrak{J}_{G_c}} $ are generated in pairwise disjoint sets of variables. Equation (\ref{E1}) implies that

$$ \depth( S/{\mathfrak{J}_{G}})= \sum_{i=1}^{c} \depth(S_i/{\mathfrak{J}_{G_i}}), $$
and

$$ \reg( S/{\mathfrak{J}_{G}})= \sum_{i=1}^{c} \reg( S_i/{\mathfrak{J}_{G_i}}). $$
Same arguments work for the $\depth S/\ini_{<}(\mathfrak{J}_G)$ and $\reg S/\ini_{<}(\mathfrak{J}_G)$ if $G$ is disconnected.

Let us recall from Section \ref{S2}, that if $G$ is a generalized block graph, then $\mathcal{A}_i(G)$ is the collection of cut sets of $G$ of cardinality $i$, where $i=1,\ldots,\omega (G)-1$. We denote $a_i(G)=|\mathcal{A}_i(G)|$.

\begin{Theorem}\label{2.2}
Let $m$,$n\geq 2$ and let $G$ be a connected generalized block graph on the vertex set $[n]$. The following statements hold:
\begin{itemize}
\item[(a)] $\depth  S/\mathfrak{J}_G=\depth S/\ini_{<}(\mathfrak{J}_G) =n+(m-1)-\sum\limits_{\substack{i=2}}^{\omega(G)-1} (i-1)a_i(G);$
  \item[(b)] If $m\geq n$, then $\reg S/\mathfrak{J}_G = \reg S/\ini_{<}(\mathfrak{J}_G) =n-1$;
  \item[(c)] If $m< n$, then $\reg S/\mathfrak{J}_G \leq  \reg (S/\ini_{<}(\mathfrak{J}_G)) \leq n-1.$
\end{itemize}
\end{Theorem}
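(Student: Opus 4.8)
The plan is to prove all three parts by induction on the number of vertices $n$, using the combinatorial structure of generalized block graphs together with the reduction machinery already in place. The key observation is that a connected generalized block graph has a leaf order $F_1,\dots,F_r$ on the facets of its clique complex, and the last facet $F_r$ is a leaf attached to the rest of the graph along a single cut set. The natural inductive step is to pick a suitable vertex or cut set and relate the invariants of $G$ to those of a smaller graph obtained by deletion, exploiting the short exact sequence
\begin{equation}\label{ses}
0 \to S/(\mathfrak{J}_G : f) \to S/\mathfrak{J}_G \to S/(\mathfrak{J}_G, f) \to 0,
\end{equation}
where $f$ is a carefully chosen variable $x_{sj}$ or a product associated to a leaf. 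Lemma~\ref{2.1} and Remark~\ref{re} guarantee that passing to $\ini_<$ commutes with adding such variables, so the equalities $\depth S/\mathfrak{J}_G = \depth S/\ini_<(\mathfrak{J}_G)$ and the regularity comparisons can be handled in the initial ideal, which is a squarefree monomial ideal and hence amenable to simplicial and Alexander-duality techniques.

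For part (a), I would set up the induction so that removing a leaf facet $F_r$ meeting the graph in a cut set $C$ of size $i = |C|$ decreases the depth formula in a controlled way. Concretely, when the leaf is glued along an $i$-element cut set, one expects the depth to drop by exactly $(i-1)$ relative to the disjoint-union value, which is precisely the term $(i-1)a_i(G)$ being subtracted; the summand $n+(m-1)$ is the depth contribution of a single clique (where $\mathfrak{J}_{K_n}$ is the ideal of a generic matrix and the depth is classical). The heart of the argument is an exact-sequence computation: one analyzes the colon ideal $(\mathfrak{J}_G : x_{sj})$ and the quotient $(\mathfrak{J}_G, x_{sj})$, shows each is (up to a change of variables and tensoring with a polynomial ring) a generalized binomial edge ideal of a strictly smaller generalized block graph, applies the inductive hypothesis to both, and then uses the depth lemma (the depth of the middle term is bounded below by the minimum of the depths of the outer terms, with equality detectable from the associated long exact sequence of local cohomology). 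The bookkeeping of how $\omega(G)$, the $a_i(G)$, and $n$ change under deletion is where the generalized-block structure is essential, since the condition that three mutually intersecting maximal cliques share a common cut set is exactly what keeps the cut-set statistics additive across the decomposition.

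For part (b), with $m \geq n$ I would show $\reg S/\ini_<(\mathfrak{J}_G) = n-1$ by sandwiching: the lower bound $\reg S/\mathfrak{J}_G \geq n-1$ follows because $G$ connected on $n$ vertices contains an induced path whose length forces regularity at least $n-1$ in the large-$m$ regime, while the upper bound comes from the same inductive exact sequence \eqref{ses}, tracking how regularity behaves under the colon and quotient operations and checking that neither outer term exceeds $n-1$. Part (c) is then the weaker one-sided statement, where only the upper bound $\reg S/\ini_<(\mathfrak{J}_G) \leq n-1$ is claimed; this drops out of the same induction without needing the matching lower bound, together with the standard fact that $\reg S/\mathfrak{J}_G \leq \reg S/\ini_<(\mathfrak{J}_G)$ since taking initial ideals can only increase regularity. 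The main obstacle I anticipate is part (a): verifying that both $(\mathfrak{J}_G : f)$ and $(\mathfrak{J}_G, f)$ really reduce to generalized binomial edge ideals of generalized block graphs (so the induction closes), and pinning down the exact depth drop of $(i-1)$ for a cut set of size $i$, will require the most careful combinatorial analysis of the leaf and its branch.
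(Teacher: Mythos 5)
Your core inductive engine fails, and not merely at the point you flag as ``the main obstacle.'' First, adding a \emph{single} entry $x_{sj}$ of column $j$ does not delete the vertex $j$: only the full column $(x_{1j},\dots,x_{mj})$ has the controlled behavior of Lemma~\ref{2.1} and Remark~\ref{re}, so $S/(\mathfrak{J}_G,x_{sj})$ is not (a polynomial extension of) a generalized binomial edge ideal of a smaller graph. Second, the colon ideal $(\mathfrak{J}_G : x_{sj})$ --- or the colon by the whole column --- is in general \emph{not} a generalized binomial edge ideal at all: already for $m=2$ and a classical $J_G$, coloning by a variable introduces extra generators (monomial multiples of path binomials) that leave the class, so the inductive hypothesis cannot be applied to the left-hand term of your sequence. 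Third, your route to $\depth S/\mathfrak{J}_G=\depth S/\ini_<(\mathfrak{J}_G)$ would require $\ini_<$ to commute with the colon, i.e.\ $\ini_<(\mathfrak{J}_G : f)=(\ini_<(\mathfrak{J}_G):f)$, which is false in general and is not supplied by Lemma~\ref{2.1} or Remark~\ref{re} (these handle only the addition of columns of variables). The paper's proof sidesteps all three problems by decomposing the radical ideal itself rather than coloning: if $A$ is the common intersection of a leaf $F_r$ and its branches (of size $\alpha$), the generalized-block condition forces every cut point set $\mathcal{T}$ to satisfy either $A\cap\mathcal{T}=\emptyset$ or $A\subseteq\mathcal{T}$, so the minimal primes split and $\mathfrak{J}_G=J_1\cap J_2$ with $J_1=\mathfrak{J}_{G'}$, $J_2=(x_{ij}: j\in A)+\mathfrak{J}_{G''}$ and $J_1+J_2=(x_{ij}: j\in A)+\mathfrak{J}_{G'_{[n]\setminus A}}$, all three again in the class with explicitly controlled cut-set statistics $a_i$; the Depth Lemma is then applied to the Mayer--Vietoris sequence~(\ref{1}), inducting on the number of \emph{maximal cliques}, and Conca's criterion ($\ini_<$ commutes with the intersection iff it commutes with the sum, the latter checked via Remark~\ref{re}) transfers the entire sequence to initial ideals, giving~(\ref{3}).

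Your lower bound in (b) is also incorrect as proposed. A connected generalized block graph can have very short induced paths --- for the star $K_{1,n-1}$ the longest induced path has three vertices --- so no induced-path argument can force $\reg S/\mathfrak{J}_G\geq n-1$; the Matsuda--Murai-type bound is of size $\ell(G)$, far below $n-1$, and is in any case a statement about $m=2$. In the paper, the exact value $n-1$ for $m\geq n$ is \emph{inherited through the induction} from the determinantal base case $\reg S/\mathfrak{J}_{K_n}=\min(m,n)-1=n-1$: the component $J_1=\mathfrak{J}_{G'}$ lives on all $n$ vertices and has regularity $n-1$ by induction, while $\reg S/(J_1+J_2)=n-\alpha-1<n-1$, so Peeva's Proposition 18.6 applied to~(\ref{1}) (and its initial-ideal analogue applied to~(\ref{3})) yields $\reg S/\mathfrak{J}_G=\reg S/\ini_<(\mathfrak{J}_G)=n-1$. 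Your part (c) is salvageable in outline (an upper bound by induction plus $\reg(I)\leq\reg(\ini_<(I))$ is exactly what the paper does, via \cite[Corollary 18.7]{P}), but only once the decomposition is replaced by one that actually stays within the class of generalized binomial edge ideals of generalized block graphs.
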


\begin{proof}
We split the proof of our theorem in two parts. In the first part we give the results for generalized binomial edge ideals, while in the second part we will present the results for their initial ideals.
This proof is based on the techniques used in the proof of \cite[Theorem 1.1]{EHH}, \cite[Theorem 3.2]{CDI} and \cite[Theorem 3.2]{KM}. Since $G$ is a chordal graph, then by Dirac's theorem \cite{Dirac}, $\Delta(G)$ is a quasi forest which means that there is a leaf order say $F_1,\ldots,F_r$, for the facets of $\Delta(G)$. Let $F_{t_1},\ldots,F_{t_q}$ be the branches of $F_r$. Since $G$ is a generalized block graph, the intersection of any pair of facets from $F_{t_1},\ldots,F_{t_q},F_r$ is the same set of vertices. Let $F_i\cap F_j=A$, for all $i,j\in\{t_1,\ldots,t_q,r\}$ and let $|A|=\alpha\geq 1$. Moreover, $F_r\cap F_k=\emptyset$ for all $k\neq t_1,\ldots,t_q,r$. This implies that $A\cap F_k=\emptyset$ for all $k\neq t_1,\ldots,t_q,r$. Hence $A$ is a $(q+1)$-minimal cut set of $G$. For any cut point set $\mathcal{T}$ of $G$, we have $A\nsubseteq \mathcal{T}$ if and only if $A\cap \mathcal{T}=\emptyset$ (see proof of Theorem 3.2 in \cite{KM}).

Let $\mathfrak{J}_G=J_1\cap J_2$, where $J_1=\bigcap\limits_{\substack{\mathcal{T}\subseteq [n] \\ A\cap \mathcal{T}=\emptyset}} P_{\mathcal{T}}(G)$ and $J_2=\bigcap\limits_{\substack{\mathcal{T}\subseteq [n] \\ A\subseteq \mathcal{T}}} P_{\mathcal{T}}(G)$. It follows that $J_{1}=J_{G'}$ where $G'$ is obtained from $G$ by replacing the cliques $F_{t_{1}},\ldots,F_{t_{q}}$ and $F_{r}$ by the clique on the vertex set $F_{r}\cup(\bigcup_{i=1}^q F_{t_{i}})$. Also, $J_{2}=(x_{1j},\ldots, x_{mj}~|~ j\in A)+J_{G''}$ where $G''$ is the restriction  of $G$ to the vertex set $[n]\setminus A$. We observe that $S/J_2=S_A/\mathfrak{J}_{G''}$, where $S_A=K[x_{1j},\ldots,x_{mj}~:~j\in [n]\setminus A]$ and $J_1+J_2=(x_{1j},\ldots, x_{mj}~:~ j\in A)+J_{G'_{[n]\setminus A}}$. Moreover, $G',G''$ and $G'_{[n]\setminus A}$ inherits the properties of $G$, that is, they are also generalized block graphs. According to the proof of Theorem 3.2 in \cite{KM}, we have $\mathcal{A}_i(G')=\mathcal{A}_i(G'_{[n]\setminus A})=\mathcal{A}_i(G)$ and $\mathcal{A}_i(G'')\subseteq\mathcal{A}_i(G)$, for all $i\neq \alpha$. But $\mathcal{A}_\alpha(G')=\mathcal{A}_\alpha(G'_{[n]\setminus A})=\mathcal{A}_\alpha(G)\setminus\{A\}$ and $\mathcal{A}_\alpha(G'')\subseteq\mathcal{A}_\alpha(G)\setminus\{A\}$.

This implies that
\begin{equation}\label{E2}
a_i(G')=a_i(G'_{[n]\setminus A})=a_i(G)
\end{equation}
and
\begin{equation}\label{E3}
a_i(G'')\leq a_i(G),~ \text{for all}~ i\neq \alpha.
\end{equation}
On the other hand,
\begin{equation}\label{E4}
a_\alpha(G')=a_\alpha(G'_{[n]\setminus A})=a_\alpha(G)-1
\end{equation}
and
\begin{equation}\label{E5}
a_\alpha(G'')\leq a_\alpha(G)-1.
\end{equation}

For $r>1$, we have the following exact sequence of $S$-modules
\begin{equation}\label{1}
0 \longrightarrow \frac{S}{\mathfrak{J}_{G}} \longrightarrow \frac{S}{J_{1}}\oplus\frac{S}{J_{2}}
\longrightarrow \frac{S}{J_{1}+ J_{2}}\longrightarrow 0.
\end{equation}
\\

$(a)$ We apply induction on the number $r$ of maximal cliques of $G$. If $r=1$, then $G$ is a simplex and the equality $\depth S/\mathfrak{J}_G =n+(m-1)$ follows by \cite[Theorem 4.4]{EHHQ}. For $r>1$, we can apply the inductive step. Since $G'$ has a smaller number of maximal cliques than $G$, it follows, by the inductive
hypothesis, that $$\depth(S/\mathfrak{J}_{G'})=n+(m-1)-\sum\limits_{\substack{i=2}}^{\omega(G')-1} (i-1)a_i(G')$$
$$\qquad\qquad\qquad\qquad\qquad\qquad\quad=n+(m-1)-\sum\limits_{\substack{i=2 \\ i\neq \alpha}}^{\omega(G')-1} (i-1)a_i(G')-(\alpha-1)a_{\alpha}(G')$$
$$\qquad\qquad\qquad\qquad\qquad\qquad\quad=n+(m-1)-\sum\limits_{\substack{i=2 \\ i\neq \alpha}}^{\omega(G)-1} (i-1)a_i(G)-(\alpha-1)(a_{\alpha}(G)-1).$$
In the last equation, we used equations (\ref{E2}) and (\ref{E4}). Therefore, we have

\begin{equation}\label{E6}
\depth(S/\mathfrak{J}_{G'})=n+(m-1)+\alpha-1-\sum\limits_{\substack{i=2}}^{\omega(G)-1} (i-1)a_i(G).
\end{equation}

Now, $G''$ has $q+1$ connected components, say, $H_1,\ldots,H_{q+1}$ on the vertex sets $[n_1],\ldots,[n_{q+1}]$. For $j=1,\ldots, {q+1}$, we set  $S^{j}_A= K[x_{1i},x_{2i} \ldots,x_{mi} : i \in [n_j]]$.  Also, $G''$ has less number of maximal cliques than $G$, so by the inductive hypothesis
$$\depth(S_A/\mathfrak{J}_{G''})=\sum_{j=1}^{q+1} (\depth(S_{A}^j/\mathfrak{J}_{H_j}))\quad\quad\qquad\qquad$$
$$\qquad\qquad\qquad\qquad=\sum_{j=1}^{q+1}(n_j+(m-1)-\sum\limits_{\substack{i=2}}^{\omega(H_j)-1} (i-1)a_i(H_j))$$
$$\qquad\qquad\qquad\qquad\qquad\qquad\quad=n-\alpha+(q+1)(m-1)-\sum\limits_{\substack{i=2 \\ i\neq \alpha}}^{\omega(G'')-1} (i-1)a_i(G'')-(\alpha-1)a_{\alpha}(G'')$$

$$\qquad\qquad\qquad\qquad\qquad\qquad\quad\geq n-\alpha+(q+1)(m-1)-\sum\limits_{\substack{i=2 \\ i\neq \alpha}}^{\omega(G)-1} (i-1)a_i(G)-(\alpha-1)(a_{\alpha}(G)-1).$$
In the last inequality, we used inequalities (\ref{E3}) and (\ref{E5}). Consequently, we get
\begin{equation}\label{E7}
\qquad\qquad\depth(S_A/\mathfrak{J}_{G''})\geq n+(q+1)(m-1)-1-\sum\limits_{\substack{i=2}}^{\omega(G)-1} (i-1)a_i(G).
\end{equation}

Moreover, $G'_{[n]\setminus A}$ is a generalized block graph with the number of maximal cliques less than $r$, hence by the inductive hypothesis, we have
$$\depth(S/\mathfrak{J}_{G'_{[n]\setminus A}})=n-\alpha+(m-1)-\sum\limits_{\substack{i=2}}^{\omega(G'_{[n]\setminus A})-1} (i-1)a_i(G'_{[n]\setminus A})$$
$$\qquad\qquad\qquad\qquad\qquad\qquad\quad=n-\alpha+(m-1)-\sum\limits_{\substack{i=2 \\ i\neq \alpha}}^{\omega(G'_{[n]\setminus A})-1} (i-1)a_i(G'_{[n]\setminus A})-(\alpha-1)a_{\alpha}(G'_{[n]\setminus A})$$
$$\qquad\qquad\qquad\qquad\qquad\qquad\quad=n-\alpha+(m-1)-\sum\limits_{\substack{i=2 \\ i\neq \alpha}}^{\omega(G)-1} (i-1)a_i(G)-(\alpha-1)(a_{\alpha}(G)-1).$$
In the last equation, we used equations (\ref{E2}) and (\ref{E4}). Therefore, we have
\begin{equation}\label{E8}
\depth(S/\mathfrak{J}_{G'_{[n]\setminus A}})=n+(m-1)-1-\sum\limits_{\substack{i=2}}^{\omega(G)-1} (i-1)a_i(G).
\end{equation}

By applying Depth lemma to our exact sequence $~(\ref{1})$, and taking into account equations (\ref{E6}), (\ref{E7}) and (\ref{E8}), we get
$$ \depth  S/\mathfrak{J}_G=n+(m-1)-\sum\limits_{\substack{i=2}}^{\omega(G)-1} (i-1)a_i(G).$$
\\

$(b)$  Let $m\geq n$. We apply again induction on $r$. If $r=1$, then $G$ is a simplex and the equality $\reg S/\mathfrak{J}_G =n-1$ is true; see \cite[Corollary 1]{CH} and \cite[Theorem 6.9]{CB}. Since $G'$ has smaller number of maximal cliques than $G$, it follows by the inductive
hypothesis, that $\reg S/\mathfrak{J}_{G'} =n-1$. We have seen in (a) that $G''$ has $q+1$ connected components, $H_1,\ldots,H_{q+1}$ on the vertex sets $[n_1],\ldots,[n_{q+1}]$. Since all $H_i$'s have less than $r$ maximal cliques, the inductive hypothesis implies that $\reg S/\mathfrak{J}_{H_i} =n_i-1$ for all $i=1,\ldots,q+1$. This implies that
$$\reg S/J_2=\reg S/\mathfrak{J}_{G''}=(n-\alpha)-(q+1).$$
It follows that $$\reg (S/J_1\oplus S/J_2)=\max\{\reg S/J_1, \reg S/J_2\}=n-1.$$
Next,
$$ \reg S/({J_1+ J_2})= \reg S/\mathfrak{J}_{G'_{[n]\setminus A}}=n-\alpha-1.$$
Here we applied the inductive hypothesis since $G'_{[n]\setminus A}$ has less number of maximal cliques than $G$. Consequently, it follows that

$$\reg(S/J_1\oplus S/J_2)> \reg S/({J_1+ J_2}),$$
which, by \cite[Proposition 18.6]{P}, yields $$\reg S/\mathfrak{J}_G=n-1.$$
\\

$(c)$ Since for any homogeneous ideal $I$ in a polynomial ring, we have $\beta_{ij}(I)\leq\beta_{ij}(\ini_<(I))$, it follows that $\reg(I)\leq\reg(\ini_<(I))$. Hence, to prove $(c)$, we just need to prove that $ \reg (S/\ini_{<}(\mathfrak{J}_G))\leq n-1$.

Now we continue in the same way as above to get the results for the initial ideal. We will use induction on number of maximal cliques of $G$.
We have $\ini_{<}(\mathfrak{J}_{G})=\ini_{<}(J_{1}\cap J_{2})$. By \cite[Lemma 1.3]{Concoa}, we get $\ini_{<}(J_{1}\cap J_{2})=\ini_{<}
(J_{1})\cap\ini_{<}(J_{2})$ if and only if $\ini_{<}(J_{1}+J_{2})=\ini_{<}(J_{1})+\ini_{<}(J_{2})$. But  $\ini_{<}(J_{1}+J_{2})=$$\ini_{<}(\mathfrak{J}_{G'}+(x_{1j},\ldots, x_{mj}~:~j\in A)+\mathfrak{J}_{G''})=$\\$\ini_{<}(\mathfrak{J}_{G'}+(x_{1j},\ldots, x_{mj}~:~j\in A))$. Hence, by Remark ~\ref{re}, we get $$\ini_{<}(J_{1}+J_{2})=\ini_{<}(\mathfrak{J}_{G'})+(x_{1j},\ldots, x_{mj}~:~j\in A)=\ini_{<}(J_{1})+\ini_{<}(J_{2}).$$ Therefore, we get $\ini_{<}(\mathfrak{J}_{G})=\ini_{<}(J_{1})\cap\ini_{<}(J_{2})$ and, consequently, we have an exact sequence of $S$-modules

\begin{equation}\label{2}
 0 \longrightarrow \frac{S}{\ini_{<}(\mathfrak{J}_{G})} \longrightarrow \frac{S}{\ini_{<}(J_{1})}\oplus\frac{S}{\ini_{<}(J_{2})}
\longrightarrow \frac{S}{\ini_{<}(J_{1}+ J_{2})}\longrightarrow 0,
\end{equation}
which is similar to exact sequence (\ref{1}).

By using again Remark~\ref{re}, we have $$\ini_{<}(\mathfrak{J}_{2})=\ini_{<}((x_{1j},\ldots, x_{mj}~:~j\in A),\mathfrak{J}_{G''})=(x_{1j},\ldots, x_{mj}~:~j\in A)+\ini_{<}(\mathfrak{J}_{G''}).$$
 Thus, we have actually the following exact sequence
\begin{equation}\label{3}
    0 \longrightarrow \frac{S}{\ini_{<}(\mathfrak{J}_{G})} \longrightarrow \frac{S}{\ini_{<}(\mathfrak{J}_{G'})}\oplus\frac{S}{(x_{1j},\ldots, x_{mj})+\ini_{
		<}(\mathfrak{J}_{G''})}\longrightarrow \frac{S}{(x_{1j},\ldots, x_{mj})+ \ini_{<}(\mathfrak{J}_{G'})}\longrightarrow 0.
\end{equation}
\\

$(a)$
If $r=1$, then $G$ is a simplex and the equality $\depth S/\mathfrak{J}_G= \depth S/\ini_<(\mathfrak{J}_G)$, follows since the ideal generated by all $2$-minors of the matrix $X$ is Cohen-Macaulay and its initial ideal shares the same property \cite{CB}. For $r>1$, since $G'$ has smaller number of maximal cliques than $G$, it follows by the inductive hypothesis, that
$$\depth(S/\mathfrak{J}_{G'})=\depth(S/\ini_<(\mathfrak{J}_{G'})).$$
By using equation (\ref{E6}), we have
$$\depth(S/\ini_<(\mathfrak{J}_{G'}))=n+(m-1)+\alpha-1-\sum\limits_{\substack{i=2}}^{\omega(G)-1} (i-1)a_i(G).$$
We have $S/((x_{1j},\ldots, x_{mj}~:~j\in A)+\ini_<(\mathfrak{J}_{G''}))\cong S_{A}/\ini_<(\mathfrak{J}_{G''})$. Since $G''$ is a graph on $n-\alpha$ vertices with $q+1$ connected components and satisfies our conditions, by induction
$$\depth S/((x_{1j},\ldots, x_{mj}~:~j\in A)+\mathfrak{J}_{G''})=\depth S/((x_{1j},\ldots, x_{mj}~:~j\in A)+\ini_<(\mathfrak{J}_{G''})).$$
By using inequality (\ref{E7}), we have
$$\depth S/((x_{1j},\ldots, x_{mj}~:~j\in A)+\ini_<(\mathfrak{J}_{G''})) \geq n+(q+1)(m-1)-1-\sum\limits_{\substack{i=2}}^{\omega(G)-1} (i-1)a_i(G).$$

We observe that $S/((x_{1j},\ldots, x_{mj}~:~j\in A)+\ini_<(\mathfrak{J}_{G'}))\cong S_{A}/\ini_<(\mathfrak{J}_{G'_{[n]\setminus A}})$. The inductive hypothesis implies that,
$$\depth(S/((x_{1j},\ldots, x_{mj}~:~j\in A)+\mathfrak{J}_{G'}))=\depth(S/((x_{1j},\ldots, x_{mj}~:~j\in A)+\ini_<(\mathfrak{J}_{G'}))).$$
By using equation (\ref{E8}), we have
$$\depth(S/((x_{1j},\ldots, x_{mj}~:~j\in A)+\ini_<(\mathfrak{J}_{G'})))=n+(m-1)-1-\sum\limits_{\substack{i=2}}^{\omega(G)-1} (i-1)a_i(G).$$
Hence, by applying Depth lemma to our exact sequence $~(\ref{3})$, we get
$$\depth S/\ini_<(\mathfrak{J}_{G})=n+(m-1)-\sum\limits_{\substack{i=2}}^{\omega(G)-1} (i-1)a_i(G).$$
\\

$(b)$ For $r=1$, $\mathfrak{J}_{G}$ and $\ini_<(\mathfrak{J}_{G})$ are Cohen-Macaulay ideals, see, for example, \cite{CB}. Since they share the same Hilbert series, then they have also the same regularity. By \cite[Corollary 1]{CH} and \cite[Theorem 6.9]{CB}, $\reg S/\mathfrak{J}_{G}=\min (m,n)-1=n-1$. Therefore $\reg (S/\ini_<(\mathfrak{J}_{G}))=n-1$. Since $G'$ has smaller number of maximal cliques than $G$, by the inductive
hypothesis, $\reg (S/\ini_<(\mathfrak{J}_G')) =n-1$. The graph $G''$ is the restriction of $G$ on the vertex set $[n]\setminus A$ and $G''$ has $q+1$ connected components $H_1,\ldots,H_{q+1}$ on the vertex sets $[n_1],\ldots,[n_{q+1}]$. Also, each $H_i$ has less number of maximal cliques than $G$, so by induction $\reg (S/\ini_<(\mathfrak{J}_{H_i})) =n_i-1$, for all $i=1,\ldots,q+1$. This implies that
$$\reg S/\ini_<(J_2)=\reg S/\ini_<(\mathfrak{J}_{G''})=(n-\alpha)-(q+1).$$
It follows that $$\reg (S/\ini_<(J_1)\oplus S/\ini_<(J_2))=\max\{\reg S/\ini_<(J_1), \reg S/\ini_<(J_2)\}=n-1.$$
Next,
$$ \reg S/\ini_<({J_1+ J_2})= \reg S/\ini_<(\mathfrak{J}_{G'_{[n]\setminus A}})=n-\alpha-1.$$
Hence we applied the inductive hypothesis since $G'_{[n]\setminus A}$ has less number of maximal cliques than $G$. Consequently, it follows that

$$\reg(S/\ini_<(J_1)\oplus S/\ini_<(J_2))> \reg S/\ini_<({\ini_<(J_1)+ \ini_<(J_2)}),$$
which by \cite[Proposition 18.6]{P}, yields $$\reg S/\ini_<(\mathfrak{J}_G)=n-1.$$


$(c)$ For $r=1$, by \cite[Corollary 1]{CH} and \cite[Theorem 6.9]{CB}, $\reg S/\mathfrak{J}_{G}=\min (m,n)-1=m-1$. Therefore $\reg (S/\ini_<(\mathfrak{J}_{G}))=m-1<n-1$. The graph $G'$ inherits the properties of $G$ and it has a smaller number of maximal cliques, so by the inductive hypothesis, $\reg (S/\ini_<(\mathfrak{J}_G')) \leq n-1$. Similarly, $\reg (S/\ini_<(\mathfrak{J}_{H_i})) \leq n_i-1$ for all $i=1,\ldots,q+1$, follow from the inductive hypothesis. This implies that
$$\reg S/\ini_<(J_2)=\reg S/\ini_<(\mathfrak{J}_{G''})\leq(n-\alpha)-(q+1).$$
It follows that
$$\reg (S/\ini_<(J_1)\oplus S/\ini_<(J_2))=\max\{\reg S/\ini_<(J_1), \reg S/\ini_<(J_2)\}\leq n-1.$$
Now,
$$ \reg S/\ini_<({J_1+ J_2})= \reg S/\ini_<(\mathfrak{J}_{G'_{[n]\setminus A}})\leq n-\alpha-1.$$
Here we applied the inductive hypothesis since $G'_{[n]\setminus A}$ has less number of maximal cliques than $G$. Consequently, it follows by \cite[Corollary 18.7]{P}, that
$$\reg S/\ini_<(\mathfrak{J}_G) \leq \max\{n-1,n-\alpha\}= n-1.$$
\end{proof}

When $G$ is a block graph, we obviously have $a_{i}(G)=0$, for all $i>1$, thus Theorem \ref{2.2} has the following consequences.
\begin{Corollary}\label{2.3}
Let $m$,$n\geq 2$ and let $G$ be a block graph on the vertex set $[n]$. The following statements holds:

\begin{itemize}
\item[(a)] $\depth  S/\mathfrak{J}_G=\depth S/\ini_{<}(\mathfrak{J}_G) =n+(m-1);$
  \item[(b)] If $m\geq n$, then $\reg S/\mathfrak{J}_G = \reg S/\ini_{<}(\mathfrak{J}_G) =n-1$;
  \item[(c)] If $m< n$, then $\reg S/\mathfrak{J}_G =\leq n-1$ and $ \reg S/\ini_{<}(\mathfrak{J}_G) \leq n-1.$
\end{itemize}
In particular, the above statements hold for a tree on the vertex set $[n]$.
\end{Corollary}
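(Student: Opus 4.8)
The plan is to obtain Corollary~\ref{2.3} as an immediate specialization of Theorem~\ref{2.2} to the case of a block graph, so essentially no new machinery is needed. The one structural fact I would invoke is the characterization recorded in Section~\ref{S2}: a chordal graph $G$ is a block graph if and only if $a_i(G)=0$ for all $i>1$. The intuition, matching the setup in the proof of Theorem~\ref{2.2}, is that in a block graph any two maximal cliques meet in at most one vertex, so the common intersection $A=F_i\cap F_j$ appearing there is always a single cut point; consequently every cut set counted by the families $\mathcal{A}_i(G)$ has cardinality one, and $a_i(G)=0$ for all $i\geq 2$. Under this hypothesis the whole corollary is just the evaluation of the formulas of Theorem~\ref{2.2}.

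First I would treat part~(a). Theorem~\ref{2.2}(a) gives
$$\depth S/\mathfrak{J}_G=\depth S/\ini_<(\mathfrak{J}_G)=n+(m-1)-\sum_{i=2}^{\omega(G)-1}(i-1)a_i(G),$$
and since $a_i(G)=0$ for every $i\geq 2$ the sum vanishes, leaving both depths equal to $n+(m-1)$. Parts~(b) and~(c) then require no additional argument at all: the equalities of Theorem~\ref{2.2}(b) and the bounds of Theorem~\ref{2.2}(c) do not involve the numbers $a_i(G)$, so they transfer verbatim. Thus if $m\geq n$ one reads off $\reg S/\mathfrak{J}_G=\reg S/\ini_<(\mathfrak{J}_G)=n-1$, and if $m<n$ one obtains $\reg S/\mathfrak{J}_G\leq n-1$ together with $\reg S/\ini_<(\mathfrak{J}_G)\leq n-1$.

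Finally, for the ``in particular'' clause I would verify that every tree is a block graph: a tree contains no cycles and is therefore chordal, while its maximal cliques are exactly its edges, so any two of them share at most one vertex. Hence trees satisfy the hypotheses of the corollary and all three statements apply to them. I expect no genuine obstacle in this argument; the only step that deserves any care is the identity $a_i(G)=0$ for $i\geq 2$, which is precisely the place where the block-graph hypothesis (as opposed to the weaker generalized block-graph hypothesis of Theorem~\ref{2.2}) is used, and which is supplied by the characterization recalled in Section~\ref{S2}.
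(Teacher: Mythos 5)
Your proposal is correct and matches the paper exactly: the paper gives no separate proof, deriving the corollary from the one-line observation that a block graph satisfies $a_i(G)=0$ for all $i>1$, which annihilates the correction term in Theorem~\ref{2.2}(a), while parts (b) and (c) carry over unchanged. Your added verification that trees are block graphs is a harmless elaboration of what the paper leaves implicit.
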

\begin{Corollary}\label{2.4}
Let $G$ be a block graph on $[n]$ and $m,n\geq 3$. Then the following are equivalent:
\begin{itemize}
  \item[(a)] $\mathfrak{J}_G$ is unmixed;
  \item[(b)] $\mathfrak{J}_G$ is Cohen-Macaulay;
  \item[(c)] $G=K_n$.
\end{itemize}

\end{Corollary}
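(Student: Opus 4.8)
The plan is to prove the cycle of implications $(c)\Rightarrow(b)\Rightarrow(a)\Rightarrow(c)$, of which the first two are immediate and essentially all the work lies in $(a)\Rightarrow(c)$. For $(c)\Rightarrow(b)$, if $G=K_n$ then $\mathfrak{J}_G$ is exactly the ideal of all $2$-minors of the generic matrix $X$, a classical determinantal ideal and hence Cohen-Macaulay (the same fact already used for the base case $r=1$ in the proof of Theorem~\ref{2.2}). The implication $(b)\Rightarrow(a)$ is the standard fact that a Cohen-Macaulay ideal is unmixed, since $S/\mathfrak{J}_G$ Cohen-Macaulay forces all associated primes to be minimal of the same dimension. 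Throughout I assume $G$ connected, as in Corollary~\ref{2.3}; this hypothesis is genuinely needed, since for instance two disjoint edges yield a Cohen-Macaulay (hence unmixed) $\mathfrak{J}_G$ with $G\neq K_n$.

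For $(a)\Rightarrow(c)$ I would argue by contraposition, comparing the dimensions of two distinguished minimal primes. Recall from Section~\ref{S2} that the minimal primes of $\mathfrak{J}_G$ are the ideals $P_\mathcal{T}$ indexed by the cut point sets $\mathcal{T}\subseteq[n]$, with $P_\mathcal{T}=(\{x_{ij}:j\in\mathcal{T}\},Q_1,\ldots,Q_{c(\mathcal{T})})$, where $Q_t$ is the ideal of $2$-minors of the $m\times n_t$ submatrix attached to the $t$-th connected component of the restriction of $G$ to $[n]\setminus\mathcal{T}$, and $n_t$ is its number of vertices. Since the $Q_t$ live in pairwise disjoint sets of variables, which are moreover disjoint from the killed variables $x_{ij}$ with $j\in\mathcal{T}$, the dimension of $S/P_\mathcal{T}$ splits as a sum. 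Using that the generic determinantal variety of $m\times n_t$ matrices of rank at most one has dimension $m+n_t-1$, I obtain
\[
\dim S/P_\mathcal{T}=\sum_{t=1}^{c(\mathcal{T})}(m+n_t-1)=(m-1)\,c(\mathcal{T})+n-|\mathcal{T}|.
\]

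Now suppose $\mathfrak{J}_G$ is unmixed but $G\neq K_n$. A connected block graph that is not a single clique has a cut vertex $v$ (otherwise its unique block would be a clique equal to $G$); then both $\emptyset$ and $\{v\}$ have the cut point property, so $P_\emptyset$ and $P_{\{v\}}$ are minimal primes. Writing $k=c(\{v\})\ge 2$ for the number of components of $G-v$, the formula above gives $\dim S/P_\emptyset=n+(m-1)$ and $\dim S/P_{\{v\}}=(m-1)k+n-1$, whence
\[
\dim S/P_{\{v\}}-\dim S/P_\emptyset=(m-1)(k-1)-1\ge (m-1)-1=m-2\ge 1,
\]
using $k\ge 2$ and $m\ge 3$. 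Thus these two minimal primes have different dimensions, contradicting unmixedness, and therefore $G=K_n$. The crucial point — and exactly where the hypothesis $m\ge 3$ enters — is this last inequality: for $m=2$ a cut vertex splitting $G$ into two pieces leaves the dimension unchanged, which is why the classical binomial edge ideal of a non-complete tree can still be unmixed. The only nonroutine ingredient is the dimension formula itself, which rests on the explicit shape of the minimal primes $P_\mathcal{T}$ and on the classical dimension of the rank-one determinantal variety; everything else is bookkeeping.
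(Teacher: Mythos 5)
Your proof is correct, and its skeleton coincides with the paper's: in both, the decisive step of $(a)\Rightarrow(c)$ is to play the cut set $\{v\}$ of a cut vertex against $\emptyset$ and observe that $m\geq 3$ makes the unmixedness condition fail, the other two implications being standard. The difference is that the paper simply cites \cite[Proposition 4.1]{EHHQ}, which says that $\mathfrak{J}_G$ is unmixed if and only if $(c(\mathcal{T})-1)(m-1)=|\mathcal{T}|$ for every cut point set $\mathcal{T}$, whereas you re-derive exactly this criterion from the explicit shape of the minimal primes $P_W$ and the classical dimension $m+n_t-1$ of the rank-one determinantal variety: your formula $\dim S/P_{\mathcal{T}}=(m-1)c(\mathcal{T})+n-|\mathcal{T}|$ is precisely the content of that proposition. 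Your route is therefore self-contained where the paper's is a citation, and it has the advantage of making visible exactly where $m\geq 3$ enters, via the inequality $(m-1)(k-1)-1\geq m-2\geq 1$, which the paper's terse justification (``the equality is possible only if the empty set is the only cut point set'') leaves implicit; it also explains cleanly why the classical case $m=2$ behaves differently. A genuinely useful addition on your side is the observation that connectedness of $G$ must be assumed: the paper's statement omits it, and your example of two disjoint edges, where $S/\mathfrak{J}_G$ is a tensor product over $K$ of Cohen--Macaulay determinantal rings and hence Cohen--Macaulay with $G\neq K_n$, shows the hypothesis is not cosmetic. Two small points you could make explicit, neither affecting correctness: that every $P_{\mathcal{T}}$ with $\mathcal{T}$ a cut point set is in fact a minimal prime (this is Rauh's description recalled in Section \ref{S2}, so both $P_\emptyset$ and $P_{\{v\}}$ really occur), and that the degenerate case $n_t=1$ of your dimension count is covered since then $Q_t=(0)$ and $m+n_t-1=m$.
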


\begin{proof}
$(a)\Rightarrow (c)$ By \cite[Proposition 4.1]{EHHQ}, $\mathfrak{J}_G$ is unmixed if and only if $$(c(\mathcal{T})-1)(m-1)=|\mathcal{T}|$$ for all subsets $\mathcal{T}$ with cut point property of $G$. If $G$ is not complete then we have at least one cut point set of cardinality $1$. Therefore, the above equality is possible if and only if  the empty set is the only cut point set of $G$. This is equivalent to saying that $G$ is complete.
\\
$(b)\Rightarrow (a)$ and $(c)\Rightarrow (b)$ are known.
\end{proof}

\begin{Corollary}\label{2.5}
 Let $m,n \geq 2.$ If $G$ is a path graph on the vertex set $[n]$, then $\reg S/\mathfrak{J}_G=\reg S/\ini_<(\mathfrak{J}_G)=n-1$.

\end{Corollary}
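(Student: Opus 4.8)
The plan is to recognize $G=P_n$ as a block graph and then read off the answer from Corollary~\ref{2.3}, splitting into the two ranges of $m$. First I would observe that a path is a tree, hence chordal, and that its maximal cliques are exactly its $n-1$ edges; in particular any two of them meet in at most one vertex, so $P_n$ is a block graph with $\omega(G)=2$ and $a_i(G)=0$ for all $i>1$. Thus Corollary~\ref{2.3} applies directly. When $m\ge n$, part~(b) of that corollary already gives
$$\reg S/\mathfrak{J}_G=\reg S/\ini_<(\mathfrak{J}_G)=n-1,$$
so in this range there is nothing further to prove.

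The remaining case is $m<n$, and here Corollary~\ref{2.3}(c) only yields upper bounds. Combining its estimate $\reg S/\ini_<(\mathfrak{J}_G)\le n-1$ with the general inequality $\reg S/\mathfrak{J}_G\le\reg S/\ini_<(\mathfrak{J}_G)$ (which comes from $\beta_{ij}(\mathfrak{J}_G)\le\beta_{ij}(\ini_<(\mathfrak{J}_G))$, exactly as in the proof of Theorem~\ref{2.2}(c)), I obtain
$$\reg S/\mathfrak{J}_G\ \le\ \reg S/\ini_<(\mathfrak{J}_G)\ \le\ n-1.$$
So the whole problem reduces to producing the matching lower bound $\reg S/\mathfrak{J}_G\ge n-1$: once this is available, the three quantities above are squeezed together, which simultaneously forces $\reg S/\mathfrak{J}_G=n-1$ and $\reg S/\ini_<(\mathfrak{J}_G)=n-1$ and hence the desired equality between them.

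The lower bound is the crux, and it is the only place the path hypothesis is genuinely used, since a general block graph need not attain regularity $n-1$ once $m<n$ (a star, for instance, has a short longest induced path). The key point is that $P_n$ is an induced subgraph of itself, so it contains an induced path through all $n$ vertices, of length $n-1$. For the classical case $m=2$ the Matsuda--Murai bound \cite{MM} then gives $\reg S/J_G\ge n-1$ at once; for arbitrary $m\ge 2$ I would instead invoke the evaluation of $\reg S/\mathfrak{J}_{P_n}$ for generalized binomial edge ideals in \cite[Corollary~15]{Sara2}, whose lower-bound half supplies $\reg S/\mathfrak{J}_G\ge n-1$. The hard part is precisely this step: certifying that a spanning induced path forces regularity at least $n-1$ for \emph{every} $m$, not merely for $m=2$, because the inductive mechanism of Theorem~\ref{2.2} propagates only the upper bound when $m<n$ (the short exact sequence~(\ref{1}) controls $\reg S/\mathfrak{J}_G$ from above via $\max\{\reg S/J_1,\reg S/J_2,\ \reg S/(J_1+J_2)+1\}$, but the connecting maps must be analyzed to pin down the lower bound, which is exactly what the induced-path estimate accomplishes). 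With the lower bound in hand the chain of inequalities collapses to $\reg S/\mathfrak{J}_G=\reg S/\ini_<(\mathfrak{J}_G)=n-1$, recovering the value of \cite[Corollary~15]{Sara2} and establishing the new equality for the initial ideal.
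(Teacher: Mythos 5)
Your argument is logically sound and reaches the right conclusion, but it handles the lower bound by a genuinely different route than the paper, and in a way that matters for what the corollary is supposed to accomplish. You obtain $\reg S/\mathfrak{J}_G\geq n-1$ by citing the lower-bound half of \cite[Corollary 15]{Sara2} --- but that is precisely the result this corollary is announced (in the introduction) to \emph{recover}; with your route the first equality is just a restatement of the citation, and only the new statement $\reg S/\mathfrak{J}_G=\reg S/\ini_{<}(\mathfrak{J}_G)$ is genuinely established, which you do correctly via the squeeze $n-1\leq \reg S/\mathfrak{J}_G\leq \reg S/\ini_{<}(\mathfrak{J}_G)\leq n-1$. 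The paper instead derives the lower bound independently of Corollary 15: by \cite[Proposition 8]{Sara2} the classical ($m=2$) binomial edge ideal satisfies $\reg S'/J_G\leq \reg S/\mathfrak{J}_G$, where $S'=K[x_{ij}: i=1,2,\ 1\leq j\leq n]$, and since $J_{P_n}$ is a complete intersection of $n-1$ quadrics one gets $\reg S'/J_G=n-1$ exactly. This one-line reduction to $m=2$ dissolves what you flagged as ``the hard part'' (certifying that a spanning induced path forces regularity at least $n-1$ for \emph{every} $m$): no analysis of the connecting maps in the sequence (\ref{1}) and no appeal to Matsuda--Murai is needed, because the complete-intersection computation is already sharp. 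Your case split $m\geq n$ versus $m<n$ is also unnecessary: Theorem \ref{2.2}(b) and (c) give $\reg S/\ini_{<}(\mathfrak{J}_G)\leq n-1$ uniformly, so the paper runs the squeeze once for all $m,n\geq 2$. In short, your proof is valid as stated, but to preserve the paper's claim of recovering \cite[Corollary 15]{Sara2} rather than assuming it, replace your citation of that corollary by the Proposition 8 plus complete-intersection argument.
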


\begin{proof}
Let $G$ be a path graph and $J_G$ be its classical binomial edge ideal. We consider $J_G\subset S'=K[x_{ij}: i=1,2,~ 1\leq j\leq n]$. Then, by using \cite[Proposition 8]{Sara2}, we get $\reg S'/J_G \leq \reg S/\mathfrak{J}_G$. As $J_G$ is a complete intersection, we get $\reg S'/J_G=n-1$. This implies that $n-1\leq \reg S/\mathfrak{J}_G\leq \reg(S/\ini_<(\mathfrak{J}_G)\leq n-1$. Therefore, the statement follows.
\end{proof}

{}


\begin{thebibliography}{}

\bibitem{B} A. Banerjee, L. Nunez-Betancourt, {\em Graph connectivity and binomial edge ideals}, Proc. Amer. Math. Soc. {\bf 145}(2) (2017), 487-499
\bibitem{CB}  W. Bruns, A. Conca, {\em Gr$\ddot{o}$bner bases and determinantal ideals. Commutative algebra}, in singularities and computer algebra (Sinaia, 2002), 9--66, NATO Sci. Ser. II Math. Phys. Chem., 115, Kluwer Acad. Publ., Dordrecht, 2003.


\bibitem{CDI} F. Chaudhry, A. Dokuyucu, R. Irfan, {\em On the binomial edge ideals of block graphs},  An. $\c{S}$tiin$\c{t}$. Univ. ``Ovidius" Constan$\c{t}$a Ser. Math. {\bf 24}(2) (2016), 149--158.

\bibitem{Concoa} A. Conca, {\em Gorenstein ladder determinantal rings}, J. London Math. Soc.  {\bf 54}(3) (1996), 453--474.


\bibitem{CH} A. Conca, J. Herzog, {\em On the Hilbert function of determinantal rings and their canonical module}, Proc. Amer. Math. Soc.  {\bf 122}(3) (1994), 677--681.



\bibitem{Dirac} G. A. Dirac, {\em On rigid circuit graphs}, Abh. Math. Semin. Univ. Hamburg, {\bf 25} (1961), pp. 71--76.

\bibitem{AD} A. Dokuyucu, {\em Extremal Betti numbers of some classes of binomial edge ideals}, Math. Rep.(Bucur.) {\bf 17(67)}(4) (2015), 359--367.

%
\bibitem{EHH} V. Ene, J. Herzog, T. Hibi, {\em Cohen-Macaulay binomial edge ideals}, Nagoya Math. J. {\bf 204} (2011),  57--68.
%
\bibitem{EHHQ} V. Ene, J. Herzog, T. Hibi, A. A. Qureshi, {\em The binomial edge ideal of a pair of graphs}, Nagoya Math. J. {\bf 213} (2014),  105--125.

\bibitem{EZ} V. Ene, A. Zarojanu, {\em On the regularity of binomial edge ideals},  Math. Nachr. {\bf 288}(1) (2015), 19--24.
%
%
\bibitem{HHHKR} J. Herzog, T. Hibi, F. Hreinsdotir, T. Kahle, J. Rauh, {\em Binomial edge ideals and conditional independence statements},
 Adv. Appl. Math. \textbf{45} (2010), 317--333.
%


\bibitem{KM} D. Kiani, S. Saeedi Madani, {\em Some Cohen-Macaulay and Unmixed Binomial Edge Ideals}, Comm. Algebra, {\bf 43} (2015), 5434-5453.

\bibitem{KM1} D. Kiani, S. Saeedi Madani, {\em The Castelnuovo-Mumford regularity of binomial edge ideals}, J. Combin. Theory Ser. A, {\bf 139} (2016), 80--86.

\bibitem{KM2} D. Kiani, S. Saeedi Madani, {\em Binomial edge ideals with pure resolution}, Collect. Math. {\bf 65}(3) (2014), 331--340.

\bibitem{MM} K. Matsuda, S. Murai, {\em Regularity bounds for binomial edge ideals,} J. Commut. Algebra {\bf 5} (2013), 141--149.
%
%
\bibitem{Oh} M. Ohtani, {\em Graphs and Ideals generated by some $2$-minors},  Comm. Algebra {\bf 39} (2011), no. 3,  905--917


\bibitem{P} I. Peeva, \textit{Graded syzygies}, Algebra and Applications {\bf 14}, Springer, 2011

\bibitem{Rauh} J. Rauh, {\em Generalized binomial edge ideals}, Adv. Appl. Math. {\bf 50}(2013), 409--414.

\bibitem{Sara} S. Saeedi Madani, D. Kiani, {\em Binomial edge ideals of graphs}, Electron. J. Combin. {\bf 19} (2012), no. 2, \# P44.

\bibitem{Sara2} S. Saeedi Madani, D. Kiani, {\em On the binomial edge ideal of a pair of graphs}, Electron. J.Combin, {\bf 20} (2013), no. 1, \# P48.

\bibitem{Sara3} S. Saeedi Madani, D. Kiani, {\em Binomial edge ideal of regularity $3$}, arXiv:1706.09002.

%
\bibitem{SZ} S. Zafar and Z. Zahid, {\em On the Betti numbers of some classes of binomial edge ideals},  Electron. J.  Combin.  {\bf 20}(4)  (2013), \# P37.
%


\end{thebibliography}
\end{document}